\newtheorem{theorem}{Theorem}[section]
\newtheorem{proposition}[theorem]{Proposition}
\newtheorem{lemma}[theorem]{Lemma}
\newtheorem{corollary}[theorem]{Corollary}
\theoremstyle{definition}
\newtheorem{definition}[theorem]{Definition}
\newtheorem{example}[theorem]{Example}
\theoremstyle{remark}
\numberwithin{equation}{section}
\newcommand{\field}[1]{\mathbb{#1}}
\newcommand{\Z }{\field{Z}}
\begin{document}

\title{Pr\"ufer property in amalgamated algebras along an ideal}

\author[Najib Mahdou]{Najib Mahdou}
\address{Department of Mathematics, Faculty of Science and Technology of Fez, Box 2202, University S. M.
Ben Abdellah Fez, Morocco}
 \email{mahdou@hotmail.com}

 \email{moutu\_2004@yahoo.fr}

\author[Moutu Abdou Salam Moutui]{Moutu Abdou Salam Moutui}
\address{Department of Mathematics, Faculty of Science and Technology of Fez, Box 2202, University S. M.
Ben Abdellah Fez, Morocco}

\subjclass[2000]{16E05, 16E10, 16E30, 16E65}



\keywords{Amalgamated algebra along an ideal, Pr\"ufer rings, Gaussian rings, amalgamated duplication, trivial rings extension.}

\begin{abstract} Let $f : A \rightarrow B$ be a ring homomorphism and  $J$ be an ideal of $B$. In this paper, we give a
characterization of zero divisors of the amalgamation which is a
generalization of Maimani's and Yassemi's work (see \cite{y}).
Also, we investigate the transfer of Pr\"ufer domain concept to
commutative rings with zero divisors in the amalgamation of $A$
with $B$ along $J$ with respect to $f$ (denoted by $A\bowtie^fJ),$
introduced and studied by D'Anna, Finocchiaro and Fontana in 2009
(see \cite{AFF1} and \cite{AFF2}). Our aim is to provide new
classes of commutative rings satisfying this property.\smallskip
\end{abstract}

\maketitle

\section{Introduction} All rings considered in this paper are commutative with identity elements and all modules are unital. In 1932, Pr\"ufer introduced
and studied in \cite{P} integral domains in which every finitely
generated ideal is invertible. In 1936, Krull \cite{Kru} named
these rings after H. Pr\"ufer and stated equivalent conditions
that make a domain Pr\"ufer. Through the years, Pr\"ufer  domains
acquired a great many equivalent characterizations, each of which
was extended to rings with zero-divisors in different ways. In
their recent paper devoted to Gaussian properties, Bazzoni and
Glaz have proved that a Pr\"ufer ring satisfies any of the other
four Pr\"ufer conditions if and only if its total ring of
quotients satisfies that same condition \cite[Theorems 3.3 \& 3.6
\& 3.7 \& 3.12]{BG2}. The authors investigate in \cite{bm1} the
transfer of these Pr\"ufer-like properties to pullbacks, and then
generate new families of rings with zero divisors subject to some
given Pr\"ufer conditions. In \cite{bkm}, the authors examined the
transfer of the Pr\"ufer conditions and obtained further evidence
for the validity of Bazzoni-Glaz conjecture sustaining that "the
weak global dimension of a Gaussian ring is 0, 1, or $\infty$"
\cite{BG2}. Notice that both conjectures share the common context
of rings. Recall that classical examples of non-semi-hereditary
arithmetical rings stem from Jensen's 1966 result \cite{J} as
non-reduced principal rings, e.g., $\Z/n^{2}\Z$ for any integer
$n\geq 2$. Abuihlail, Jarrar and Kabbaj studied in \cite{abjk}
commutative rings in which every finitely generated ideal is
quasi-projective (denoted by $fqp$-rings). They proved that this
class of rings stands strictly between the two classes of
arithmetical rings and Gaussian rings. Thereby, they generalized
Osofsky's theorem on the weak global dimension of arithmetical
rings and partially resolve Bazzoni-Glaz$^{'}$s related conjecture
on Gaussian rings. They also established an analogue of
Bazzoni-Glaz results on the transfer of Pr\"ufer conditions
between a ring and its total ring of quotients. In \cite{CJKM},
the authors studied the transfer of the notions of local Pr\"ufer
ring and total ring of quotients. They examined the arithmetical,
Gaussian, fqp conditions to amalgamated duplication along an
ideal. At this point, we make the following definition:
\begin{definition}  Let $R$ be a commutative ring.\\
\begin{enumerate}
\item $R$ is called a \emph{semi-hereditary} if every finitely generated ideal of R is projective (see \cite{BS}).
\item $R$ is said to have  \emph{weak global dimension $\leq$ 1} if every finitely generated ideal of R is flat (see \cite{G2}).
\item $R$ is called an \emph{arithmetical ring} if the lattice formed by its ideals is distributive (see \cite{Fu}). \\
\item $R$ is called a \emph{Gaussian ring} if for every $f, g \in R[X]$, one has the content ideal equation $c(fg) = c(f)c(g)$ (see \cite{T}).\\
\item $R$ is called a \emph{Pr\"ufer ring} if every finitely generated regular ideal of $R$ is invertible (equivalently, every two-generated regular ideal is invertible), (See \cite{BS,Gr}).
\end{enumerate}
\end{definition}
In \cite{G3}, it is proved that each one of the above conditions implies the following
next one:  \begin{center}
Semi-hereditary $\Rightarrow$ weak global dimension$\leq$ 1 $\Rightarrow$ Arithmetical $\Rightarrow$ Gaussian $\Rightarrow$ Pr\"ufer.
\end{center}
Also examples are given to show that, in general, the implications cannot be reversed. Moreover, an investigation is carried
out to see which conditions may be added to any of these properties in order
to reverse the implications. Recall that in the domain context, the above five
class of Pr\"ufer-like rings collapse to the notion of Pr\"ufer domain. For more details on these notions, we refer to reader to \cite{bkm,bm1,BG,BG2,G2,G3,LR,Lu,T}.\\

In this paper, we investigate the transfer of Pr\"ufer property in amalgamation of rings issued from local rings, introduced and studied by D'Anna, Finocchiaro, and  Fontana in \cite{AFF1, AFF2} and defined  as follows :

\begin{definition}

Let $A$ and $B$ be two rings with unity, let $J$ be an ideal of
$B$ and let $f: A\rightarrow B$ be a ring homomorphism. In this
setting, we can consider the following subring of $A\times B$:
\begin{center} $A\bowtie^{f}J: =\{(a,f(a)+j)\mid a\in A,j\in
J\}$\end{center} called \emph{the amalgamation of $A$ and $B$
along $J$ with respect to $f$}. In particular, they have studied amalgmations in the frame of pullbacks which allowed them to establish
numerous (prime) ideal and ring-theoretic basic properties for this new construction. This
construction is a generalization of \emph{the amalgamated
duplication of a ring along an ideal} (introduced and studied by
D'Anna and Fontana in \cite{A, AF1, AF2}). The interest of amalgamation
resides, partly, in its ability to cover several basic constructions in commutative algebra,
including pullbacks and trivial ring extensions (also called Nagata's idealizations)(cf. \cite[page 2]{Nagata}). Moreover, other
classical constructions (such as the $A+XB[X]$, $A+XB[[X]]$, and the $D+M$ constructions) can be studied as particular cases of the amalgamation (\cite[Examples 2.5 and 2.6]{AFF1}) and other classical constructions, such as the CPI extensions (in the sense of Boisen and Sheldon \cite{Boisen}) are strictly related to it (\cite[Example 2.7 and Remark 2.8]{AFF1}). In \cite{AFF1}, the authors studied the basic properties of this construction (e.g., characterizations for $A\bowtie^{f}J$ to be a Noetherian ring, an integral domain, a reduced ring) and they characterized those distinguished pullbacks that can be expressed as an amalgamation. Moreover, in \cite{AFF2}, they pursued the investigation on the structure of the rings of the form $A\bowtie^{f}J$, with particular attention to the prime spectrum, to the chain properties and to the Krull dimension.
\end{definition}

\section{Main results}\label{sec:2}
We start by giving a description of the set of zero-divisors of
$A\bowtie^{f}J$. Recall that $Z(R)$ is the set of zero-divisors of
a ring $R$ and $Reg(R)$ is the set of regular elements of $R$.
Recall that an element $m$ of a module $M$ over a ring $R$ is
called a torsion element of the module if there exists a regular
element $r \in R$ that annihilates
$m$, i.e., $r m = 0$. A module $M$ over a ring $R$ is called a torsion module if all its elements are torsion elements.\\
\begin{proposition}\label{prop2}
Let $(A,B)$ be a pair of rings, $f : A \rightarrow B$ be a ring homomorphism and $J$ be a  proper ideal of $B$. Assume that at least one of the following conditions hold :\\
$(a)$ $J\subset f(A)$.\\
$(b)$ $J$ is a torsion $A-$module (with the $A-$module structure inherited by $f$).\\
$(c)$ $J^2=0$. \\

Then $Z(A\bowtie^{f}J)=\{Z(A)\bowtie^{f}J$ $\}\cup \{(a,0)/f(a)\in J\}\cup \{(0,x)/x\in J\}\cup \{(a,f(a)+i)/$ $a$ is a regular element of $A$ and $(f(a)+i)j=0,$ for some $0\neq j\in J\}$.\\

\end{proposition}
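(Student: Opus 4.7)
My plan is to prove the two inclusions separately, noting that the forward inclusion needs no hypothesis while the backward inclusion leverages $(a)$, $(b)$, or $(c)$ depending on the case.

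For the forward inclusion ``$\subseteq$'': suppose $(a,f(a)+j)$ is a zero divisor, so there is a nonzero $(b,f(b)+k)\in A\bowtie^f J$ with $(a,f(a)+j)(b,f(b)+k)=(0,0)$. From the first coordinate $ab=0$. If $a\in Z(A)$ (taking $0\in Z(A)$), then $(a,f(a)+j)\in Z(A)\bowtie^f J$, which is the first set. Otherwise $a$ is regular, so $b=0$, and then $(b,f(b)+k)\neq 0$ forces $0\neq k\in J$; the second-coordinate equation reduces to $(f(a)+j)k=0$, placing the element in the fourth set. None of the hypotheses on $J$ is needed here.

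For the backward inclusion ``$\supseteq$'' I verify each of the four listed sets consists of zero divisors. The fourth set is handled by the obvious annihilator $(0,j)$, using the hypothesis $(f(a)+i)j=0$ built into the definition. For $(a,0)$ with $f(a)\in J$, any $(0,k)$ with $0\neq k\in J$ is an annihilator. For $(0,x)$ with $0\neq x\in J$, I split on which of the three hypotheses holds: under $(a)$ lift $x=f(c)$ with $c\neq 0$ and use $(c,0)$; under $(b)$ pick $r\in Reg(A)$ with $f(r)x=0$ and use $(r,f(r))$; under $(c)$ use $(0,y)$ for any $0\neq y\in J$ since $xy\in J^2=0$.

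The delicate case is $(a,f(a)+j)\in Z(A)\bowtie^f J$. Pick $a'\neq 0$ in $A$ with $aa'=0$; the candidate $(a',f(a'))\in A\bowtie^f J$ yields product $(0,jf(a'))$, which vanishes when $j=0$ or $f(a')=0$. The subcase $jf(a')\neq 0$ is the main obstacle, and is where the hypotheses on $J$ enter. Under $(b)$, replace $a'$ by $ra'$ for a regular $r\in A$ killing $j$ (such $r$ exists since $J$ is a torsion $A$-module); the product becomes $(0,f(r)jf(a'))=(0,0)$, and $ra'\neq 0$ because $r$ is regular. Under $(c)$, form the refined annihilator $(a',f(a'))\cdot (0,k)=(0,f(a')k)\in A\bowtie^f J$, which can be chosen nonzero by picking $k\in J$ with $f(a')k\neq 0$ (if no such $k$ exists, then $f(a')J=0$ and $(a',f(a'))$ itself already works); its product with $(a,f(a)+j)$ is $(0,jf(a')k)=(0,0)$ since $jk\in J^2=0$. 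Under $(a)$, write $j=f(c)$ so that $(a,f(a)+j)=(a,f(a+c))$, and build an annihilator by lifting: if $(a+c)$ is a zero divisor the earlier argument with $j=0$ applies in the rewritten form, and if it is regular then $(a+c)b=0$ combined with $ab=0$ forces $cb=0$, allowing one to produce a nonzero annihilator in the pullback-image of $A$.

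The technical heart is the coordination just described: each of the three hypotheses on $J$ forces a genuinely different annihilator construction, and one must check in every case that the element produced is a genuinely nonzero member of $A\bowtie^f J$. Once all four sets are checked, the two inclusions combine to give the stated equality.
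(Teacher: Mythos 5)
Your overall architecture (forward inclusion hypothesis-free, backward inclusion checking each of the four sets, with the real work concentrated on $Z(A)\bowtie^{f}J\subseteq Z(A\bowtie^{f}J)$) is the same as the paper's, and most of it is sound: the forward direction is correct, the treatments of the second, third and fourth sets are fine, and your constructions for hypothesis $(b)$ (multiply $a'$ by a regular $r$ with $f(r)j=0$) and hypothesis $(c)$ (use $(0,f(a')k)$, killing the cross term via $jk\in J^{2}=0$) both work --- the latter is in fact cleaner than the paper's own computation at that spot. The genuine gap is in hypothesis $(a)$ for the hard subcase $jf(a')\neq 0$. Your proposed case split on whether $a+c$ is regular does not produce an annihilator: if $a+c$ is regular then $(a+c)b=0$ forces $b=0$, so the displayed chain ``$(a+c)b=0$ and $ab=0$ force $cb=0$'' is either vacuous or irrelevant, and in any case $cb=0$ is the \emph{opposite} of what you need. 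Concretely, take $A=B=\Z/6\Z$, $f=\mathrm{id}$, $J=2\Z/6\Z$, and the element $(3,5)=(3,3+2)$ with $a'=2$: here $a+c=5$ is a unit, $cb=4\neq0$, and no nonzero annihilator of the form $(d,f(d))$ exists; the element is nevertheless a zero divisor, but only via an annihilator your sketch does not construct.

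The missing idea is the one the paper uses: since $J\subseteq f(A)$, choose $c\in f^{-1}(J)$ with $f(c)=j$ and consider $(ca',0)$. This lies in $A\bowtie^{f}J$ because $f(ca')=jf(a')\in J$; it is nonzero precisely because you are in the case $jf(a')\neq0$ (so $ca'\neq0$); and it annihilates $(a,f(a)+j)$ because the first coordinate is $a\cdot ca'=c(aa')=0$ and the second coordinate of $(ca',0)$ is $0$, making the second coordinate of the product vanish regardless of $f(a)+j$. In other words, the trick is not to find a common annihilator of $a$ and $a+c$ in $A$, but to push the annihilator of $a$ into $f^{-1}(J)\times\{0\}$, where the second coordinate causes no obstruction. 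With that replacement your proof is complete.
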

\begin{proof}
$(1)$ Assume that $J\subseteq f(A)$. It is clear that
$\{(a,0)/f(a)\in J\}\subseteq Z(A\bowtie^{f}J),$  $\{(0,x)/x\in
J\}\subseteq Z(A\bowtie^{f}J)$ and $\{(a,f(a)+i)/$ $a$ is a
regular element and $(f(a)+i)j=0$ for some $0\neq j\in
J\}\subseteq Z(A\bowtie^{f}J)$. It remains to show that
$Z(A)\bowtie^{f}J\subseteq Z(A\bowtie^{f}J)$. Let $(a,f(a)+i)\in
Z(A)\bowtie^{f}J.$ Clearly, $a\in Z(A)$ and $i\in J.$ So, there
exist $0\neq b\in A$ such that $ab=0$. If $i=0$ or $f(b)=0$ or
$f(b)\in Ann(J),$ then there exists $(0,0)\neq (b,f(b))\in
A\bowtie^{f}J$ such that $(a,f(a)+i)(b,f(b))=(0,0)$ and so
$(a,f(a)+i)\in Z(A\bowtie^{f}J)$. Assume that $0\neq i,$ $f(b)\neq
0$ and $f(b)\not \in Ann(J).$ Then, there exists $0\neq k\in J$
such that $f(b)k\neq 0$. Using the fact $J\subseteq f(A),$ there
exists some $x\in f^{-1}(J)$ such that $f(x)=k$ and so clearly
$bx\neq 0$ and  $(bx,0)\in A\bowtie^{f}J$. Therefore, there exists
$(0,0)\neq(bx,0)\in A\bowtie^{f}J$ such that
$(a,f(a)+i)(bx,0)=(0,0).$ Hence, $(a,f(a)+i)\in Z(A\bowtie^{f}J).$
Thus, $Z(A)\bowtie^{f}J\subseteq Z(A\bowtie^{f}J)$. Conversely,
Let $(a,f(a)+i)\in Z(A\bowtie^{f}J)$. If $f(a)=-i$ or $a=0,$ then
$(a,f(a)+i)\in \{(a,0)/f(a)\in J\}$ or $(a,f(a)+i)\in \{(0,x)/x\in
J\}$. Assume that $f(a)\neq-i$ and $a\neq0$. Then, there exists
$(0,0)\neq(b,f(b)+j)\in A\bowtie^{f}J$ such that
$(a,f(a)+i)(b,f(b)+j)=(0,0)$. Therefore, $ab=0$ and
$(f(a)+i)(f(b)+j)=0$. If $a$ is a regular element, then $b=0,$ and
$j(f(a)+i)=0$. So, $(a,f(a)+i)\in \{(a,f(a)+i)/$ $a$ is a regular
element and $j(f(a)+i)=0$ for some $0\neq j\in J\}$. Assume that
$a$ is a zero-divisor of $A$ $(b\neq0)$. Then, $a\in Z(A)$ and
$i\in J$. And so $(a,f(a)+i)\in \{Z(A)\bowtie^{f}J\}$. Hence,
$Z(A\bowtie^{f}J)=\{Z(A)\bowtie^{f}J$ $\}\cup \{(a,0)/f(a)\in
J\}\cup \{(0,x)/x\in J\}\cup \{(a,f(a)+i)/$ $a$ is a regular
element and
$(f(a)+i)j=0$ for some $0\neq j\in J\},$ as desired.\\
$(2)$ Assume that $J$ is a torsion $A-$module. Similar arguments
as $(1)$ above, we show that $Z(A\bowtie^{f}J)\subseteq
\{Z(A)\bowtie^{f}J$ $\}\cup \{(a,0)/f(a)\in J\}\cup \{(0,x)/x\in
J\}\cup \{(a,f(a)+i)/$ $a$ is a regular element of $A$ and
$(f(a)+i)j=0,$ for some $0\neq j\in J\}$. On the other hand, one
can easily check that $\{(a,0)/f(a)\in J\}\subseteq
Z(A\bowtie^{f}J),$ $\{(0,x)/x\in J\}\subseteq Z(A\bowtie^{f}J)$
and $\{(a,f(a)+i)/$ $a$ is a regular element and $(f(a)+i)j=0$ for
some $0\neq j\in J\}\subseteq Z(A\bowtie^{f}J)$. It remains to
show that $\{Z(A)\bowtie^{f}J$ $\}\subseteq Z(A\bowtie^{f}J)$. Let
$(a,f(a)+i)\in Z(A)\bowtie^{f}J.$ Clearly, $a\in Z(A)$ and $i\in
J.$ So, there exist $0\neq b\in A$ such that $ab=0$. Using the
fact $J$ is a torsion $A-$module, there is some $x\in A\backslash
Z(A)$ such that $f(x)i=0$ and it is obvious that $0\neq xb$ since
$0\neq b$. So, there exists $(0,0)\neq(xb,f(xb))\in A\bowtie^{f}J$
such that $(xb,f(xb))(a,f(a)+i)=(0,0)$. Hence, $(a,f(a)+i)\in
Z(A\bowtie^{f}J)$. Thus,
$\{Z(A)\bowtie^{f}J$ $\}\subseteq Z(A\bowtie^{f}J)$, as desired.\\
$(3)$ Assume that $J^2=0$. It is clear that $\{(a,0)/f(a)\in J\}\subseteq Z(A\bowtie^{f}J),$  $\{(0,x)/x\in J\}\subseteq Z(A\bowtie^{f}J)$
and $\{(a,f(a)+i)/$ $a$ is a regular element and $(f(a)+i)j=0$ for some $0\neq j\in J\}\subseteq Z(A\bowtie^{f}J)$. It remains to show that
$Z(A)\bowtie^{f}J\subseteq Z(A\bowtie^{f}J)$. Let $(a,f(a)+i)\in Z(A)\bowtie^{f}J.$ Clearly, $a\in Z(A)$ and $i\in J.$ So, there exist $0\neq b\in A$
such that $ab=0$. If $i=0$ or $f(b)=0$ or $f(b)\in Ann(J),$ then there exists $(0,0)\neq (b,f(b))\in A\bowtie^{f}J$ such that $(a,f(a)+i)(b,f(b))=(0,0)$
and so $(a,f(a)+i)\in Z(A\bowtie^{f}J)$. Assume that $0\neq i,$ $f(b)\neq 0$ and $f(b)\not \in Ann(J).$ Then, there exists $0\neq k\in J$ such that
$f(b)k\neq 0$. One can easily check that $(a,f(a)+i)(b,f(b)+kf(b))=(0,0)$. So, there exists $(0,0)\neq(b,f(b)+kf(b)\in A\bowtie^{f}J$ such that
$(a,f(a)+i)(b,f(b)+kf(b)=(0,0)$. Hence, $(a,f(a)+i)\in Z(A\bowtie^{f}J)$ and $Z(A)\bowtie^{f}J\subseteq Z(A\bowtie^{f}J)$. Conversely, with similar
arguments as $(1)$, we obtain that $Z(A\bowtie^{f}J)\subseteq \{Z(A)\bowtie^{f}J$ $\}\cup \{(a,0)/f(a)\in J\}\cup \{(0,x)/x\in J\}\cup \{(a,f(a)+i)/$ $a$
is a regular element of $A$ and $(f(a)+i)j=0,$ for some $0\neq j\in J\}$, as desired.
\end{proof}
It is worthwhile noting that the statement $(1)$ of Proposition \ref{prop2} recovers \cite[Proposition 2.2]{y} where $f$ is the identity maps.\\

The main Theorem of this paper develops a result on the transfer of the Pr\"ufer property to amalgamation of rings issued from local rings.
\begin{theorem}\label{thm0} Let $(A,m)$ be a local ring, $B$ be a ring, $f : A \rightarrow B$ be a ring homomorphism and $J$ be a proper ideal of $B$ such that $J\subseteq Rad(B)$. Assume that $J\subset f(A)$ and $f(Reg(A))= Reg(B)$. Then $A\bowtie^{f}J$ is Pr\"ufer if and only if so is $A$ and $J=f(a)J$ for all $a\in m \backslash Z(A)$.\\
\end{theorem}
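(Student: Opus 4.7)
The plan is to exploit the local structure provided by the hypotheses. Since $A$ is local and $J\subseteq\mathrm{Rad}(B)$, the standard description of maximal ideals of an amalgamation (see \cite{AFF2}) shows that $A\bowtie^{f}J$ is itself local, with maximal ideal $m\bowtie^{f}J$. In a local ring, Pr\"uferness is equivalent to every $2$-generated regular ideal being principal; moreover, such a principal $2$-generated regular ideal is always generated by one of its two generators, since in a local ring one of the B\'ezout coefficients must be a unit.

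For the forward direction, assume $A\bowtie^{f}J$ is Pr\"ufer. To show $A$ is Pr\"ufer, lift a finitely generated regular ideal $I=(a_1,\ldots,a_n)$ of $A$ (with $a_1\in\mathrm{Reg}(A)$ say) to $I':=((a_1,f(a_1)),\ldots,(a_n,f(a_n)))$ in $A\bowtie^{f}J$. Using $f(\mathrm{Reg}(A))=\mathrm{Reg}(B)$ together with Proposition~\ref{prop2}, the element $(a_1,f(a_1))$ is regular, so $I'$ is regular and hence principal; projecting via $(x,f(x)+j)\mapsto x$ yields $I$ principal. For the equality $J=f(a)J$ with $a\in m\setminus Z(A)$, apply Pr\"uferness to the $2$-generated regular ideal $((a,f(a)),(0,j))$ for arbitrary $j\in J$; principality forces it to coincide with $((a,f(a)))$ (the alternative $((0,j))$ would imply $a=0$), so $(0,j)=(a,f(a))(0,k)$ for some $k\in J$, giving $j=f(a)k\in f(a)J$.

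For the converse, take a $2$-generated regular ideal $I'=((a,f(a)+i),(b,f(b)+l))$ of $A\bowtie^{f}J$ with $(a,f(a)+i)$ regular, so $a\in\mathrm{Reg}(A)$ by Proposition~\ref{prop2}. Local Pr\"uferness of $A$ makes $(a,b)$ principal, hence equal to $(a)$ or $(b)$. In the first case $b=ra$ for some $r\in A$, and subtracting $(r,f(r))(a,f(a)+i)$ from $(b,f(b)+l)$ reduces $I'$ to $((a,f(a)+i),(0,j))$ for some $j\in J$; in the second case $a=sb$, and an analogous subtraction yields $I'=((b,f(b)+l),(0,j'))$ with $j'\in J$ and $b\in\mathrm{Reg}(A)$. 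In either case, the problem reduces to producing $y\in J$ satisfying $(f(a)+i)y=j$ (respectively $(f(b)+l)y=j'$), as this yields $(0,j)=(a,f(a)+i)(0,y)$ and hence principality of $I'$.

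The central obstacle is precisely this solvability inside $J$. The trick is to use the hypothesis $J=f(a)J$ (applicable because $a\in\mathrm{Reg}(A)$: either $a\in m$, where the hypothesis applies, or $a$ is a unit, in which case $J=f(a)J$ is automatic) to write $i=f(a)i_1$ and $j=f(a)k_0$ with $i_1,k_0\in J$, so that $(f(a)+i)y=f(a)(1+i_1)y$. Since $i_1\in J\subseteq\mathrm{Rad}(B)$, the element $1+i_1$ is invertible in $B$; setting $y:=(1+i_1)^{-1}k_0\in B$, one gets $(f(a)+i)y=f(a)k_0=j$ by regularity of $f(a)$, and the identity $y=k_0-i_1y$ certifies $y\in J$ since both $k_0$ and $i_1y$ lie in $J$. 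The regularity of the resulting principal generator is automatic: it divides the regular element $(a,f(a)+i)$, so any annihilator of it would also annihilate a regular element.
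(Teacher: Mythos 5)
Your overall strategy (localness of $A\bowtie^{f}J$, reduction of invertibility to principality, comparability of the $A$-components, then solving $(f(a)+i)y=j$ inside $J$ via $J=f(a)J$ and the invertibility of $1+i_1$ for $i_1\in\mathrm{Rad}(B)$) is the same as the paper's, and your treatment of the forward direction is actually cleaner: you get ``$A\bowtie^{f}J$ Pr\"ufer $\Rightarrow$ $A$ Pr\"ufer'' by lifting a regular generating set and projecting a principal generator back down, whereas the paper routes this through Gaussian polynomials and the Bazzoni--Glaz content characterization (its Lemmas on Gaussian polynomials). Your derivation of $J=f(a)J$ from the ideal $((a,f(a)),(0,j))$ is also correct and slightly more direct than the paper's version.

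There is, however, one genuine gap in the converse. You write ``take a $2$-generated regular ideal $I'=((a,f(a)+i),(b,f(b)+l))$ with $(a,f(a)+i)$ regular,'' but a regular ideal is only required to \emph{contain} a regular element; nothing forces one of the two given generators to be regular. Since $Z(A)$ (hence $Z(A\bowtie^{f}J)$) need not be an ideal, the case where both generators are zero-divisors while some combination of them is regular is not vacuous, and your argument as written never reaches it --- so you have only shown that $2$-generated regular ideals \emph{admitting a regular generator} are invertible, which is weaker than Pr\"ufer. The paper treats this case separately: choose a regular element $(y,f(y)+h)\in F$, note $y\in\mathrm{Reg}(A)$ (by the easy inclusion $Z(A)\bowtie^{f}J\subseteq Z(A\bowtie^{f}J)$), observe that for each generator $(b,f(b)+l)$ the comparability of $(y)$ and $(b)$ must fall as $b\in(y)$ (otherwise $b$ would be regular), and then run exactly your subtraction-and-solvability step to conclude $F=((y,f(y)+h))$. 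Alternatively, a purely formal repair is available from what you already proved: adjoin a regular element $r\in I'$ as an auxiliary generator, apply your two-generator result to $(r,g_1)$ and $(r,g_2)$ to get principal regular ideals $(c_1)$, $(c_2)$, and compare $(c_1)$ and $(c_2)$ (both generators regular, so your Bézout-coefficient argument applies) to conclude $I'$ is principal. Either way, the missing case must be addressed explicitly.
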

The proof of this theorem involves the following lemmas. Recall that $V_B(J):=\{P\in$ $ $Spec(B)$: P\supseteq J\}$ (for more details see \cite{AFF2}). We denote $V_B(J)$ simply by $V(J)$.

\begin{lemma}\label{lem1}
Let $(A,B)$ be a pair of rings, $f : A \rightarrow B$ be a ring homomorphism and  $J$ be a proper ideal of
$B$. Then, $A\bowtie^{f}J$ is local if and only if so is $A$ and $J\subseteq Rad(B)$.
\end{lemma}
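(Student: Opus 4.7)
The plan is to characterize locality by controlling units and non-units. For the forward direction, the surjection $\pi_A \colon A \bowtie^{f} J \twoheadrightarrow A$, $(a, f(a)+j) \mapsto a$ (with kernel $\{0\} \times J$), exhibits $A$ as a homomorphic image of a local ring, hence local. For the inclusion $J \subseteq \mathrm{Rad}(B)$ I would argue contrapositively: if $J \not\subseteq \mathfrak{n}$ for some maximal ideal $\mathfrak{n}$ of $B$, then $J + \mathfrak{n} = B$, so $j + n = 1$ for some $j \in J$ and $n \in \mathfrak{n}$. Both $(0, j)$ and $(1, 1-j) = (1, n)$ lie in $A \bowtie^{f} J$ and are non-units (the former because its first coordinate is $0$, the latter because its second coordinate $n$ is a non-unit of $B$), yet their sum is the identity $(1, 1)$. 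Since locality requires the non-units to be closed under addition, this is a contradiction.

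For the converse, assume $A$ is local with maximal ideal $\mathfrak{m}$ and $J \subseteq \mathrm{Rad}(B)$. I would propose
\[
M := \{(a, f(a)+j) : a \in \mathfrak{m},\ j \in J\}
\]
as the unique maximal ideal. That $M$ is an ideal is immediate from the ring operations. The substantive point is that every element outside $M$ is a unit: given $a \in A \setminus \mathfrak{m}$, $a \in A^{\times}$ and hence $f(a) \in B^{\times}$, so the factorization $f(a)+j = f(a)\bigl(1 + f(a)^{-1} j\bigr)$ together with $f(a)^{-1} j \in \mathrm{Rad}(B)$ yields $f(a)+j \in B^{\times}$.

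The main technical obstacle is then producing the inverse of $(a, f(a)+j)$ inside $A \bowtie^{f} J$, which amounts to verifying that $u^{-1} \in f(A) + J$ for $u := f(a)+j$. The trick I plan to use is to multiply the identity $f(a)\, u^{-1} = 1 - j\, u^{-1}$ by $f(a^{-1})$, yielding $u^{-1} = f(a^{-1}) - f(a^{-1}) j\, u^{-1} \in f(a^{-1}) + J$. Thus $(a^{-1}, u^{-1})$ lies in $A \bowtie^{f} J$ and inverts $(a, f(a)+j)$. This identifies $M$ with the complement of $(A \bowtie^{f} J)^{\times}$, hence as the unique maximal ideal, completing the proof.
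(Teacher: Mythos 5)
Your proof is correct, but it takes a genuinely different route from the paper's. The paper deduces everything from the description of the maximal spectrum of the amalgamation in \cite[Proposition 2.6 (5)]{AFF2}: $Max(A\bowtie^{f}J)$ consists of the ideals $m\bowtie^{f}J$ for $m\in Max(A)$ together with the ideals $\overline{Q}^{f}$ for $Q\in Max(B)$ with $Q\not\supseteq J$, so locality of $A\bowtie^{f}J$ is equivalent to $A$ being local and the second family being empty, i.e.\ $J\subseteq Rad(B)$. You instead argue from first principles: you get locality of $A$ from the surjection onto $A$ with kernel $\{0\}\times J$; you rule out $J\not\subseteq Rad(B)$ by exhibiting two non-units $(0,j)$ and $(1,n)$ summing to the identity (a clean use of the fact that the non-units of a local ring are closed under addition); and for the converse you verify directly that every element of $(A\bowtie^{f}J)\setminus(m\bowtie^{f}J)$ is invertible, the only delicate point being that the inverse of $f(a)+j$ actually lies in $f(A)+J$, which your identity $u^{-1}=f(a^{-1})-f(a^{-1})ju^{-1}$ settles. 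What the paper's approach buys is brevity, at the cost of invoking the full classification of $Max(A\bowtie^{f}J)$; what yours buys is a self-contained elementary proof that does not depend on that external result. Both are complete.
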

\begin{proof}
By \cite[Proposition 2.6 (5)]{AFF2}, $Max(A\bowtie^{f}J)=\{m\bowtie^{f}J$ / $m\in Max(A)\}\cup\{\overline{Q}^f\}$ with $Q\in Max(B)$ not containing $V(J)$ and $\overline{Q}^f:=\{(a,f(a)+j)$ / $a\in A, j\in J$ and $f(a)+j \in Q$ \}.
Assume that $A\bowtie^{f}J$ is local. It is clear that $A$ is local by the above characterization of $Max(A\bowtie^{f}J)$. We claim that $J\subseteq Rad(B)$. Deny. Then there exist $Q\in Max(B)$ not containing $V(J)$ and so $Max(A\bowtie^{f}J)$ contains at least two maximal ideals, a contradiction since $A\bowtie^{f}J$ is local. Hence, $J\subseteq Rad(B)$. Conversely, assume that $(A,m)$ is local and $J\subseteq Rad(B)$. Then $J$ is contained in $Q$ for all $Q\in Max(B)$. Consequently, the set $\{\overline{Q}^f\}$ is empty. And so $Max(A\bowtie^{f}J)=\{m\bowtie^{f}J/m\in Max(A)\}$. Hence, $m\bowtie^{f}J$ is the only maximal ideal of $A\bowtie^{f}J$ since $(A,m)$ is local. Thus, $(A\bowtie^{f}J,M)$ is local with $M=m\bowtie^{f}J$, as desired.
\end{proof}

Recall that a polynomial $f$ is called Gaussian over a ring $A$ if for any polynomial $h$ over $A$, $c(fh) = c(f)c(h)$ holds, where $c(f)$ is the content of $f$ (the ideal generated by the coefficients of the polynomial $f(x)$). For more details of this notion, we refer to reader to see \cite{T,c,HH}.
\begin{lemma}\label{lem3}
Let $(A,B)$ be a pair of rings, $f : A \rightarrow B$ be a ring homomorphism and  $J$ be a  proper ideal of $B.$ If the polynomial $F:=\sum_{i=0}^{n}(a_i,f(a_i))x^i$ is Gaussian over $A\bowtie^{f}J$, then $f:=\sum_{i=0}^{n}a_ix^i$ is Gaussian over $A$.
\end{lemma}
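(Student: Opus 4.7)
The plan is to exploit the natural surjective ring homomorphism $\pi_A : A\bowtie^{f}J \to A$ given by $\pi_A((a,f(a)+j))=a$, and to use the fact that surjective ring homomorphisms respect the formation of content ideals and of products of ideals. To avoid the clash between the symbol $f$ for the structure map and for the polynomial in the statement, let me write $g:=\sum_{i=0}^{n}a_{i}x^{i}\in A[x]$ for the candidate Gaussian polynomial.

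First I would fix an arbitrary $h=\sum_{j=0}^{m}b_{j}x^{j}\in A[x]$ and lift it to the polynomial $H:=\sum_{j=0}^{m}(b_{j},f(b_{j}))x^{j}$ over $A\bowtie^{f}J$, using the canonical section $a\mapsto (a,f(a))$ of $\pi_A$. By hypothesis $F$ is Gaussian over $A\bowtie^{f}J$, so one has the content equality
\[
c(FH)=c(F)\,c(H)
\]
as ideals of $A\bowtie^{f}J$. The goal is then to push this identity down to $A$ via $\pi_A$ and obtain $c(gh)=c(g)\,c(h)$.

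The second step is to verify the two compatibility facts needed for the descent. Since $\pi_A$ is a ring homomorphism, it extends coefficient-wise to a ring homomorphism $(A\bowtie^{f}J)[x]\to A[x]$, and by construction this extension sends $F$ to $g$ and $H$ to $h$; in particular $\pi_A(FH)=gh$, so $\pi_A(c(FH))=c(gh)$, using that for a surjective homomorphism the image of the ideal generated by a set is the ideal generated by the images. For the right-hand side, the surjectivity of $\pi_A$ gives $\pi_A(IK)=\pi_A(I)\pi_A(K)$ for any two ideals $I,K$ of $A\bowtie^{f}J$: the inclusion $\subseteq$ is automatic, and the reverse follows because any element of $A$ is $\pi_A$ of something in $A\bowtie^{f}J$, so every $A$-linear combination of products $\pi_A(\alpha)\pi_A(\beta)$ lies in $\pi_A(IK)$. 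Applied to $I=c(F)$ and $K=c(H)$, this yields $\pi_A(c(F)c(H))=c(g)\,c(h)$.

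Combining these two observations, applying $\pi_A$ to the Gaussian identity for $F,H$ gives $c(gh)=c(g)\,c(h)$. Since $h\in A[x]$ was arbitrary, $g$ is Gaussian over $A$, as desired. I do not expect any serious obstacle in this argument: the only points to check carefully are the well-definedness and surjectivity of $\pi_A$ (immediate from the description of $A\bowtie^{f}J$ as a subring of $A\times B$ containing all pairs $(a,f(a))$) and the compatibility of $\pi_A$ with content ideals and with products of ideals, both of which are standard for surjective ring homomorphisms.
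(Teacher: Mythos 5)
Your proof is correct and is essentially the same argument as the paper's: both lift $h$ to $H$ via the diagonal section $a\mapsto(a,f(a))$, invoke the Gaussian identity $c(FH)=c(F)c(H)$ in $A\bowtie^{f}J$, and descend to $A$ through the first projection. The only cosmetic difference is that you transport the whole ideal equality at once using the standard facts about images of ideals under a surjective homomorphism, whereas the paper checks the nontrivial inclusion $c(g)c(h)\subseteq c(gh)$ element by element (the reverse inclusion being automatic).
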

\begin{proof}
Assume that $F:=\sum_{i=0}^{n}(a_i,f(a_i))x^i$ is Gaussian over $A\bowtie^{f}J$. We claim that $f:=\sum_{i=0}^{n}a_ix^i$ is Gaussian over $A$. Indeed, let $h(x):=\sum_{j=0}^{m}b_jx^j$ be a polynomial over $A$. Then, $H(x):=\sum_{j=0}^{m}(b_j,f(b_j))x^j$ is a polynomial over $A\bowtie^{f}J$. Consider $\gamma \in c(f)c(h).$ Clearly, $(\gamma,f(\gamma))\in c(F)c(H)=c(FH)=\sum_{i+j=k}^{}(a_ib_j,f(a_i)f(b_j))$ with $k\in \{0,1,...,m+n\}$ since $F$ is Gaussian in $A\bowtie^{f}J$. Consequently, $\gamma\in \sum_{i+j=k}^{}a_ib_j=c(fh)$. Finally, $f:=\sum_{i=0}^{n}a_ix^i$ is Gaussian over $A$.
\end{proof}
\begin{lemma}\label{lem4}
Let $(A,B)$ be a pair of rings, $f : A \rightarrow B$ be a ring homomorphism and  $J$ be a  proper ideal of $B$. Assume that $f(Reg(A))\subseteq Reg(B)$. If $A\bowtie^{f}J$ is Pr\"ufer, then so is $A$.\\
\end{lemma}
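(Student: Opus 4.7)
The plan is to establish the Pr\"ufer property of $A$ by showing that every two-generated regular ideal of $A$ is invertible.

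Fix such an ideal $I = (a_1, a_2)$ of $A$ and a regular element $c = r_1 a_1 + r_2 a_2 \in I$. Setting $R := A \bowtie^{f} J$, I lift $I$ to the two-generated ideal $\tilde I := ((a_1, f(a_1)), (a_2, f(a_2)))$ of $R$, which contains $(c, f(c)) = (r_1, f(r_1))(a_1, f(a_1)) + (r_2, f(r_2))(a_2, f(a_2))$. The first step is to verify that $(c, f(c))$ is regular in $R$: if $(c, f(c))(x, f(x)+j) = (0,0)$, then $cx=0$ forces $x=0$ since $c \in Reg(A)$, and then $f(c)j = 0$ together with $f(c) \in f(Reg(A)) \subseteq Reg(B)$ forces $j=0$. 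Hence $\tilde I$ is a finitely generated regular ideal of the Pr\"ufer ring $R$, so $\tilde I$ is invertible.

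Because $(c, f(c))$ is a regular element of $\tilde I$, the inverse fractional ideal takes the explicit form $\tilde I^{-1} = (c,f(c))^{-1}\bigl((c,f(c))R :_R \tilde I\bigr)$, so invertibility of $\tilde I$ is equivalent to the ideal equation $\tilde I \cdot \bigl((c,f(c))R :_R \tilde I\bigr) = (c, f(c)) R$ inside $R$. I then apply the canonical surjection $\pi_A : R \to A$, $(a, f(a)+j) \mapsto a$, to both sides. A direct computation shows that $(z, f(z)+j_z) \in R$ lies in $\bigl((c,f(c))R :_R \tilde I\bigr)$ if and only if $z \in (cA :_A I)$ and $j_z f(a_i) \in f(c)J$ for $i=1,2$; by taking $j_z = 0$ one sees that $\pi_A$ maps this colon ideal onto $(cA :_A I)$. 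Since $\pi_A$ is a surjective ring homomorphism, applying it to the invertibility equation yields $I \cdot (cA :_A I) = cA$ in $A$, and since $c \in Reg(A)$ this is equivalent to invertibility of $I$ in $A$ via $I^{-1} = c^{-1}(cA :_A I)$.

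The main technical obstacle is the explicit description of $\bigl((c,f(c))R :_R \tilde I\bigr)$ together with the verification that its $\pi_A$-image equals $(cA :_A I)$; the crucial simplification is that the $B$-side constraints $j_z f(a_i) \in f(c)J$ are trivially satisfied when $j_z = 0$, so they do not restrict which elements $z \in A$ appear in the projected image. Once this bookkeeping is done, the argument proceeds uniformly from the invertibility of $\tilde I$ in $R$ to the invertibility of $I$ in $A$.
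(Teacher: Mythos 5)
Your proof is correct, but it descends invertibility from $A\bowtie^{f}J$ to $A$ by a genuinely different mechanism than the paper. Both arguments begin identically: lift $I$ to $\tilde I$ generated by the $(a_i,f(a_i))$, and use the hypothesis $f(Reg(A))\subseteq Reg(B)$ to see that $(c,f(c))$ is a regular element of $A\bowtie^{f}J$, so that $\tilde I$ is a finitely generated regular ideal and hence invertible. From there the paper routes through Gaussian polynomials: by Bazzoni--Glaz, invertibility of $\tilde I$ makes the polynomial $\sum (a_i,f(a_i))x^i$ Gaussian over $A\bowtie^{f}J$; the paper's Lemma 2.5 transfers this to Gaussianness of $\sum a_i x^i$ over $A$; and Bazzoni--Glaz again converts that back into invertibility of $I=c(f)$. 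You instead stay entirely at the level of ideals: you rewrite invertibility of $\tilde I$ as the colon-ideal equation $\tilde I\cdot\bigl((c,f(c))R:_R\tilde I\bigr)=(c,f(c))R$, compute that the colon ideal projects under $\pi_A:(a,f(a)+j)\mapsto a$ exactly onto $(cA:_A I)$ (your decoupling of the $A$-side condition $za_i\in cA$ from the $B$-side condition $jf(a_i)\in f(c)J$ is the right computation, and taking $j=0$ does give surjectivity onto $(cA:_A I)$), and push the equation through the surjection to get $I\cdot(cA:_A I)=cA$. Your route is more elementary and self-contained --- it avoids the Gaussian-polynomial characterization of invertibility entirely and makes transparent that the only role of the hypothesis is to guarantee that regular elements of $A$ lift to regular elements of the amalgamation --- at the modest cost of the colon-ideal bookkeeping; the paper's route is shorter on the page because it outsources the work to the Bazzoni--Glaz equivalence and to its own Lemma 2.5.
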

\begin{proof}
Assume that $\forall a\in A\backslash Z(A),$ $f(a)\in B\backslash Z(B)$ and $A\bowtie^{f}J$ is Pr\"ufer. Let $I:=\sum_{i=0}^{n}Aa_i$ be a finitely generated regular ideal of $A$ and $a$ be a regular element of $I$. Clearly, $H:=\sum_{i=0}^{n}A\bowtie^{f}J(a_i,f(a_i))$ is a finitely generated regular ideal of $A\bowtie^{f}J$ since $(a,f(a))$ is a regular element of $H$. So, $H$ is invertible. Therefore, the polynomial $F(x):=\sum_{i=0}^{n}(a_i,f(a_i))x^i$ is Gaussian over $A\bowtie^{f}J$. By Lemma \ref{lem3}, $f(x):=\sum_{i=0}^{n}a_ix^i$ is Gaussian over $A$. Hence, by \cite[Theorem 4.2 (2)]{BG}, $I=c(f)$ is invertible. Thus, $A$ is Pr\"ufer, as desired.\\
\end{proof}

\begin{lemma}\label{lem5}
Let $(A,m)$ be a local Pr\"ufer ring, $f:A\rightarrow B$ be a ring homomorphism and $J$ be a proper ideal of $B$ such that $J\subseteq Rad(B)$. Assume that $J\subset f(A),$ $f^{-1}(J)\subset Z(A)$ and $f(Reg(A))\subseteq Reg(B)$. Then $Z(A\bowtie^{f}J)=Z(A)\bowtie^{f}J$.\\
 \end{lemma}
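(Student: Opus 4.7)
The plan is to invoke Proposition \ref{prop2}(1), which is available since $J\subset f(A)$, to expand $Z(A\bowtie^{f}J)$ as the union of four pieces: $Z(A)\bowtie^{f}J$, $\{(a,0):f(a)\in J\}$, $\{(0,x):x\in J\}$, and $\{(a,f(a)+i):a\in Reg(A),\ (f(a)+i)j=0\text{ for some }0\neq j\in J\}$. The inclusion $Z(A)\bowtie^{f}J\subseteq Z(A\bowtie^{f}J)$ is already supplied by Proposition \ref{prop2}(1), so the task reduces to showing that each of the other three pieces lies inside $Z(A)\bowtie^{f}J$. For the first two this is essentially a rewriting using the hypothesis $f^{-1}(J)\subseteq Z(A)$: namely $(a,0)=(a,f(a)+(-f(a)))$ with $a\in f^{-1}(J)\subseteq Z(A)$, and $(0,x)=(0,f(0)+x)$ with $0\in Z(A)$.

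The real content is to prove that the fourth piece is empty. Given $(a,f(a)+i)$ with $a\in Reg(A)$ and some $0\neq j\in J$ satisfying $(f(a)+i)j=0$, I would use $J\subset f(A)$ to lift $i=f(u)$ and $j=f(v)$ with $u,v\in f^{-1}(J)\subseteq Z(A)$ and $v\notin\ker f$ (since $j\neq 0$). The equation $(f(a)+i)j=0$ then becomes $f((a+u)v)=0$. The contradiction I am aiming at will follow once I establish the subclaim $a+u\in Reg(A)$: indeed $f(Reg(A))\subseteq Reg(B)$ then gives $f(a+u)\in Reg(B)$, and the relation $f(a+u)f(v)=0$ forces $f(v)=0$, contradicting $j\neq 0$.

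The subclaim is the main obstacle and is where the Pr\"ufer hypothesis on $A$ enters. The approach will be to suppose $a+u\in Z(A)$, pick a witness $w\neq 0$ with $(a+u)w=0$, and analyse the finitely generated regular ideal $(a,u)$ (regular since $a\in Reg(A)$). By the Pr\"ufer property of $A$, $(a,u)$ is invertible, and in a local ring invertible ideals are principal with a regular generator; writing $(a,u)=(c)$ with $a=c\alpha$, $u=c\beta$ and $c=as+ut$, cancellation of the regular element $c$ in $c=c(\alpha s+\beta t)$ yields $\alpha s+\beta t=1$, so locality forces $\alpha$ or $\beta$ to be a unit. If $\beta\in A^\times$ then $u=c\beta\in Reg(A)$, contradicting $u\in Z(A)$. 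Otherwise $\alpha\in A^\times$ and $u=ar$ for $r:=\alpha^{-1}\beta$; then $a(1+r)w=0$ and regularity of $a$ give $(1+r)w=0$, so $1+r\in Z(A)\subseteq m$, whence $r\in A\setminus m=A^\times$, and again $u=ar\in Reg(A)$, a contradiction. This completes the subclaim and hence the lemma.
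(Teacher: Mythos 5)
Your proposal is correct and follows essentially the same route as the paper: expand $Z(A\bowtie^{f}J)$ via Proposition~\ref{prop2}(a), absorb the sets $\{(a,0): f(a)\in J\}$ and $\{(0,x): x\in J\}$ into $Z(A)\bowtie^{f}J$ using $f^{-1}(J)\subseteq Z(A)$, and kill the fourth set by writing $f(a)+i=f(a+u)$ and showing $a+u\in Reg(A)$ via the Pr\"ufer property. The only difference is that where the paper cites \cite[Lemma 3.8]{abjk} for the comparability of $(a)$ and $(u)$, you prove it inline (invertible ideals in a local ring are principal with regular generator), which is a valid and self-contained substitute.
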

\begin{proof}
 Assume that $J\subset f(A),$ $f^{-1}(J)\subset Z(A)$ and $f(Reg(A))\subseteq Reg(B)$. By Proposition \ref{prop2}, $Z(A\bowtie^{f}J)=\{Z(A)\bowtie^{f}J$ $\}\cup \{(a,0)/f(a)\in J\}\cup \{(0,x)/x\in J\}\cup \{(a,f(a)+i)/$ $a$ is a regular element of $A$ and $(f(a)+i)j=0,$ for some $0\neq j\in J\}$. It is easy to see that the set $\{(a,0)/f(a)\in J\}=f^{-1}(J)\times \{0\}\subset Z(A)\bowtie^{f}J$ since $f^{-1}(J)\subset Z(A)$ and one can easily check that $\{(0,x)/x\in J\}\subset Z(A)\bowtie^{f}J$. Our aim is to show that if $A$ is Pr\"ufer, the set $\{(a,f(a)+i)/$ a is a regular element of $A$ and $(f(a)+i)j=0,$ for some $0\neq j\in J\}$ is empty. Indeed, we show that $f(a)+i$ is a regular element of $B$. Let $a\in A\backslash Z(A)$ and let $i\in J$. Using the fact $J\subset f(A),$ there exists $x\in f^{-1}(J)$ such that $i=f(x)$ and so $f(a)+i=f(a)+f(x)=f(a+x)$. Since $f(Reg(A))\subseteq Reg(B)$, then it suffices to show that $a+x$ is a regular element of $A$. Since the ideal $(a,x)$ is a regular ideal,then $(a)$ and $(x)$ are comparable by \cite[Lemma 3.8]{abjk} since $A$ is Pr\"ufer and we have necessarily $x=ak$ for some $k$ non-unit in $A$. One can easily check that $a+x=(1+k)a$ which is a regular element of $A$. Consequently, $f(a)+i$ is a regular element of $B$ and it follows that the set $\{(a,f(a)+i)/$ $a$ is a regular element of $A$ and $(f(a)+i)j=0,$ for some $0\neq j\in J\}$ is empty. Hence, $Z(A\bowtie^{f}J)=Z(A)\bowtie^{f}J$.
\end{proof}
\begin{proof}[Proof of Theorem \ref{thm0}]
By Lemma \ref{lem1}, $(A\bowtie^{f}J,m\bowtie^{f}J)$ is local.\\
   Assume that $J\subset f(A),$ $f(Reg(A))=Reg(B)$ and $A\bowtie^{f}J$ is Pr\"ufer. By Lemma \ref{lem4}, $A$ is Pr\"ufer. Before proving that $f^{-1}(J)\subset Z(A)$, we show that if $A\bowtie^{f}J$ is Pr\"ufer, then $J\subseteq Z(B).$ Deny. Let $j\in J$ such that $j\in Reg(B)$. Since $J\subset f(A),$ there exists $x\in Reg(A)$ such that $j=f(x)$ since $f(Reg(A))=Reg(B)$ and so $(x,j)\in Reg(A\bowtie^{f}J)$. By \cite[Lemma 3.8]{abjk}, $((0,j))$ and $((x,j))$ are comparable since $A\bowtie^{f}J$ is Pr\"ufer. And necessarily, $(0,j)=(x,j)(b,f(b)+k)$. Therefore, $b=0$ and $kj=j.$ So, $j(1-k)=0$. It follows that $j=0,$ a contradiction since $j\in Reg(B).$ Hence, $J\subseteq Z(B).$ Next, we claim that $f^{-1}(J)\subset Z(A)$. Deny. There exists $x\in f^{-1}(J)$ such that $x\not \in Z(A)$. Then, $f(x)\in reg(B)$ since $f(Reg(A))=Reg(B)$, a contradiction since $J\subset Z(B)$. Hence, $f^{-1}(J)\subset Z(A)$. By Lemma \ref{lem5}, $Z(A\bowtie^{f}J)=Z(A)\bowtie^{f}J$. Now, we claim that $f(a)J=J$ for all $a\in m\backslash Z(A)$. Indeed, it is clear that $f(a)J\subseteq J$ $\forall$ $a\in m \backslash Z(A)$. Conversely, let $a\in m\backslash Z(A)$ and let $i\in J.$ Clearly, $(a,f(a)+i)\in (A\bowtie^{f}J)\backslash Z(A\bowtie^{f}J)$ since $Z(A\bowtie^{f}J)=Z(A)\bowtie^{f}J$ and we have $(0,i)\in A\bowtie^{f}J$. By \cite[Lemma 3.8]{abjk}, the ideal $(a,f(a)+i)$ and $(0,i)$ are comparable since $A\bowtie^{f}J$ is Pr\"ufer and necessarily $(0,i)=(a,f(a)+i)(b,f(b)+j)$ for some $(b,f(b)+j)\in A\bowtie^{f}J$. So, $b=0$ and $i=(f(a)+i)j$ for some $j\in J.$ Hence, it follows that $i=f(a)j(1-j)^{-1}$ for some $0\neq j\in J$ and so $i\in f(a)J$. Thus, $J=f(a)J$. Conversely, assume that $A$ is Pr\"ufer and $J=f(a)J$ for all $a\in m \backslash Z(A)$. We show that $J=f(a)J$ for all $a\in m \backslash Z(A)$ implies that $J\subset Z(B)$. Deny. Let $j\in J$ such that $j\in Reg(B)$. Using the fact $J\subset f(A),$ $j=f(x)$ for some $x\in Reg(A)$. So, $j=f(x)l$ for some $l\in J$. It follows that $j(1-l)=0$ and so $j=0$ since $(1-l)$ is invertible in $B$ ($l\in J\subseteq Rad(B)).$ Hence, $J\subset Z(B)$ and so one can easily check that $J\subset Z(B)$ implies that $f^{-1}(J)\subset Z(A)$. By Lemma \ref{lem5}, $Z(A\bowtie^{f}J)=Z(A)\bowtie^{f}J$. Next, we show that $A\bowtie^{f}J$ is Pr\"ufer. Consider $F:=((a,f(a)+i),(b,f(b)+j))$ be a regular ideal of $A\bowtie^{f}J$. Assume one, at least, of the two generators of $F$ is regular. By Lemma \ref{lem5}, $(a)$ or $(b)$ is a regular ideal of $A$ and by \cite[Lemma 3.8]{abjk}, $(a)$ and $(b)$ are comparable in $A$. We may assume that $(a)$ is regular and $b=ax$ for some $x\in A$. Using the fact $J=f(a)J$ for all $a\in m \backslash Z(A)$, there is $k\in J$ such that $j-if(x)=(f(a)+i)k$ since there is some $l\in J$ such that $f(a)+i=f(a)(1+l)$ which is a regular element and $f(a)+i=f(a+c)$ for some $c\in f^{-1}(J)$. One can easily check that $(b,f(b)+j)=(a,f(a)+i)(x,f(x)+k)$. Consequently, $F:=((a,f(a)+i))$. Now, we assume that both generators of $F$ are zero divisors and let $(y,f(y)+h)$ be a regular element of $A\bowtie^{f}J$. Then, with similar arguments as above, there exist $a',b'\in A$ and $k_1,k_2\in J$ such that $a=a'y,$ $b=b'y,$ and $i-hf(a')=(f(y)+h)k_1,$ $j-hf(b')=(f(y)+h)k_2$. Therefore, $(a,f(a)+i)=(a',f(a')+k_1)(y,f(y)+h)$ and $(b,f(b)+j)=(b',f(b')+k_2)(y,f(y)+h)$. Consequently, $F:=((y,f(y)+h))$. Hence, $F$ is principal and so invertible. Thus, by \cite[Theorem 2.13 (2)]{BG}, $A\bowtie^{f}J$ is Pr\"ufer. \\
\end{proof}
The following corollary is a consequence of Theorem \ref{thm0}.\\
\begin{corollary}\label{c}
Let $(A,m)$ be a local ring and $I$ be a proper ideal of $A$. Then $A\bowtie I$ is Pr\"ufer if and only if so is $A$ and $I=aI$ for all $a\in m\backslash Z(A)$.
\end{corollary}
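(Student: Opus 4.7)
The plan is to obtain this corollary as a direct specialization of Theorem \ref{thm0}. Recall that the amalgamated duplication $A\bowtie I$, introduced by D'Anna and Fontana, coincides with the amalgamation $A\bowtie^{f}J$ in the case $B=A$, $f=\mathrm{id}_A$, and $J=I$; this is stated explicitly in the discussion following the definition of the amalgamation. Thus, the task reduces to checking that every hypothesis of Theorem \ref{thm0} is automatically satisfied under this choice.

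First, I would verify the structural hypotheses. Since $(A,m)$ is local and $I$ is a proper ideal, $I\subseteq m=\mathrm{Rad}(A)=\mathrm{Rad}(B)$. Because $f$ is the identity, $f(A)=A$, so in particular $I\subset f(A)$. Moreover, $f(\mathrm{Reg}(A))=\mathrm{Reg}(A)=\mathrm{Reg}(B)$ trivially. Hence all the hypotheses of Theorem \ref{thm0} hold.

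Second, I would apply Theorem \ref{thm0} to conclude that $A\bowtie I = A\bowtie^{\mathrm{id}_A}I$ is Pr\"ufer if and only if $A$ is Pr\"ufer and $I=f(a)I=aI$ for every $a\in m\backslash Z(A)$. This is exactly the statement of the corollary.

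There is no real obstacle: the entire content of the corollary is already packaged in Theorem \ref{thm0}, and the only thing to do is recognize the amalgamated duplication as the identity-map instance of the general amalgamation and note that each hypothesis trivializes under $f=\mathrm{id}_A$.
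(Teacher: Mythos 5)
Your proposal is correct and coincides with the paper's own proof: the authors likewise identify $A\bowtie I$ with $A\bowtie^{f}J$ for $f=\mathrm{id}_A$, $B=A$, $J=I$, check that $I\subseteq\mathrm{Rad}(A)=m$, $J\subset f(A)$ and $f(\mathrm{Reg}(A))=\mathrm{Reg}(B)$, and then invoke Theorem \ref{thm0}. Nothing is missing.
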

\begin{proof}
It is easy to see that $A\bowtie I=A\bowtie^f J$ where $f$ is the identity map of $A$, $B=A,$ $J=I$. By Lemma \ref{lem1}, $(A\bowtie I,M)$ is a local ring with $M=m\bowtie I$ since $(A,m)$ is a local ring and $I\subseteq Rad(A)=m$. Moreover, $f(Reg(A))=Reg(A)=reg(B)$ and $J=I\subset f(A)=A$. By Theorem \ref{thm0}, the conclusion is straightforward.
 \end{proof}
The following example illustrates the failure of Theorem 2.3 in general, beyond the context $J\subset f(A)$ and $f(Reg(A))=Reg(B)$.\\
\begin{example}
Let $(A,m)$ be a non-valuation local domain, $E:=\frac{A}{m}$ and $B:=A\propto E$ be the trivial ring extension of $A$ by $E$. Consider  $$\begin{array}{clcl}
  f: & A & \hookrightarrow & B \\
   & a & \hookrightarrow & f(a)=(a,0) \\
\end{array}$$ be a ring homomorphism and $J:=0\propto E$ be a proper ideal of $B$. Then the following statement hold :\\
$(1)$ $J\not \subset f(A)$.\\
$(2)$ $f(Reg(A))\neq Reg(B)$.\\
$(3)$ $A$ is not Pr\"ufer.\\
$(4)$ $A\bowtie^{f}J$ is Pr\"ufer.\\
\end{example}
\begin{proof}
$(1)$ $f(A)=A\propto 0$ and $J=0\propto E.$ It is easy to see that $f(A)\cap J=(0)$. Hence, $J\not \subset f(A)$.\\
$(2)$ Let $0\neq b\in m.$ Then, $f(b)=(b,0)$ is a zero-divisor of $B$ since $(b,0)(0,e)=(0,0)$ for all $0\neq e\in E.$ Hence, $f(Reg(A))\neq Reg(B)$.\\
$(3)$ Straightforward.\\
$(4)$ One can easily check that $A\bowtie^{f}J=A\propto B$ which is Pr\"ufer by \cite[Example 2.8]{bkm}.
 \end{proof}
It is easy to see that if $A\bowtie^{f}J$ is Gaussian, then so is $A$ since the Gaussian property is stable under factor rings and by \cite[Proposition 5.1 (3)]{AFF1}, $A\cong \frac{A\bowtie^{f}J}{\{0\}\times \{J\}}$. Recall also that a local ring is Gaussian if "for any two elements $a,b$ in the ring, we have $<a,b>^2=<a^2>$ or $<b^2>$; moreover, if $ab=0$ and, say, $<a,b>^2=<a^2>$, then $b^2=0$" (see \cite[Theorem 2.2 ]{BG2}).\\

 Theorem \ref{thm0} enriches the literature with new examples of non-Gaussian Pr\"ufer rings.\\
 \begin{example}
Let $(B,m)$ be a local Pr\"ufer ring and $I$ be a proper ideal of $B$ such that $I=m=Z(B)$ with $I^2\neq (0),$ (for instance $B:=\frac{\mathbb{Z}}{8\mathbb{Z}}$ and $I=m:=2\frac{\mathbb{Z}}{8\mathbb{Z}}=Z(B))$. By Corollary \ref{c}, $A:=B\bowtie I$ is a local Pr\"ufer ring. Consider $f: A\rightarrow B$ be a surjective ring homomorphism and $J:=I$ be a proper ideal of $B$. Then :\\
$(1)$ $A\bowtie^{f}J$ is Pr\"ufer.\\
$(2)$ $A\bowtie^{f}J$ is not Gaussian.
\end{example}
\begin{proof}
$(1)$ It is easy to see that $J=I\subset f(A)=B$ and $Z(B\bowtie I)=Z(B)\bowtie I$ by Lemma \ref{lem5} since $(B,m)$ is (local) Pr\"ufer and $I\subseteq Z(B)$. One can easily check that $f(Reg(A))=Reg(B)$ and $J:=Z(B)$. So, by Theorem \ref{thm0}, $A\bowtie^{f}J$ is Pr\"ufer.\\
$(2)$ We claim that $A:=B\bowtie I$ is not Gaussian. Deny. $B$ is (local) Gaussian. Let $a,b\in I$. Then, $(a,0)$ and $(0,b)\in B\bowtie I$ (which is local Gaussian) and by \cite[Theorem 2.2 ]{BG2}, $<(a,0),(0,b)>^2=<(a^2,0),(0,b^2)>=<(a^2,0)>$ or $<(0,b^2)>$. Therefore, it follows that $b^2=0$ or $a^2=0$. So, using the fact $B$ is (local) Gaussian, $<a,b>^2=<a^2>=<0>$ or $<b^2>=<0>$ and so $ab=0$. It follows that $I^2=(0)$, a contradiction. Hence, $A$ is not Gaussian. Thus, $A\bowtie^{f}J$ is not Gaussian.
\end{proof}

 \begin{example}
Let $(A,m)$ be a non-Gaussian local Pr\"ufer ring such that $m:=Z(A)$ (for instance $(A,m):=(A_0\propto B,m_0\propto B)$ with $A_0$ be a non-valuation local domain and $B:=A_0\propto E$ with $E$ be a non-zero $A-$module such that $ME=0$ see \cite[Example 2.8]{bkm}). Consider $I:=m$ be a proper ideal of $A$. Then :\\
$(1)$ $A\bowtie I$ is Pr\"ufer.\\
$(2)$ $A\bowtie I$ is not Gaussian.\\
\end{example}
\begin{proof}
$(1)$ By Corollary \ref{c}, $A\bowtie I$ is Pr\"ufer.\\
$(2)$ $A\bowtie I$ is not Gaussian since $A$ is not Gaussian.\\
\end{proof}

\bibliographystyle{amsplain}

\end{document}